\newtheorem{Theorem}{Theorem}[section]
\newtheorem{Lemma}[Theorem]{Lemma}
\newtheorem{Corollary}[Theorem]{Corollary}
\theoremstyle{definition}
\newtheorem{Definition}[Theorem]{Definition}
\newtheorem{Remark}[Theorem]{Remark}
\numberwithin{equation}{section}
\newcommand\thref[1]{Theorem \ref{#1}}
\newcommand\leref[1]{Lemma \ref{#1}}
\newcommand\coref[1]{Corollary \ref{#1}}
\newcommand\deref[1]{Definition \ref{#1}}
\newcommand{\sld}{\mathfrak{sl}_{d+1}}
\newcommand{\gl}{\mathfrak{gl}}
\newcommand{\Nset}{\mathbb N}
\newcommand{\Cset}{\mathbb C}
\newcommand{\Rset}{\mathbb R}
\newcommand{\Iset}{\mathbb I}
\newcommand{\Pt}{\tilde{P}}
\newcommand{\Vt}{\tilde{V}}
\newcommand{\pt}{\tilde{p}}
\newcommand{\xt}{\tilde{x}}
\newcommand{\et}{\tilde{e}}
\newcommand{\mt}{\tilde{m}}
\newcommand{\kt}{\tilde{k}}
\newcommand{\nt}{\tilde{n}}
\newcommand{\Ht}{\tilde{H}}
\newcommand{\phit}{\tilde{\phi}}
\newcommand{\thetat}{\tilde{\theta}}
\newcommand{\MdN}{\mathcal{M}_{d,N}}
\newcommand{\Span}{\mathrm{span}}
\newcommand{\diag}{\mathrm{diag}}
\newcommand{\Ad}{\mathrm{Ad}}
\newcommand{\Id}{\mathrm{Id}}
\newcommand{\fa}{\mathfrak{a}}
\newcommand{\fb}{\mathfrak{b}}
\newcommand{\fK}{\mathcal{K}}
\newcommand{\fp}{\mathfrak{p}}
\newcommand{\pd}{{\partial}}
\newcommand{\cP}{\mathcal{P}}
\begin{document}

\title[A Lie interpretation of multivariate polynomials]{A Lie theoretic interpretation of multivariate hypergeometric  polynomials}

\date{September 2, 2011}

\author[P.~Iliev]{Plamen~Iliev$^*$}
\address{School of Mathematics, Georgia Institute of Technology,
Atlanta, GA 30332--0160, USA}
\email{iliev@math.gatech.edu}
\thanks{$^*$ Supported in part by NSF grant DMS-0901092 and Max-Planck-Institut f\"ur Mathematik, Bonn}

\maketitle

\begin{abstract}
In 1971 Griffiths used a generating function to define polynomials in $d$ variables orthogonal with respect to the multinomial distribution.  The polynomials possess a duality between the discrete variables and the degree indices. In 2004 Mizukawa and Tanaka related these polynomials to character algebras and the Gelfand hypergeometric series. Using this approach they clarified the duality and obtained a new proof of the orthogonality. In the present paper, we interpret these polynomials within the context of the Lie algebra $\sld(\Cset)$. Our approach yields yet another proof of the orthogonality. It also shows that the polynomials satisfy $d$ independent recurrence relations each involving $d^2+d+1$ terms. This combined with the duality establishes their bispectrality. We illustrate our results with several explicit examples.
\end{abstract}

%%%%%%%%%%%%%%%%%%%%%%%%%%%%%%%%%%%%%%%%%% Section 1
\section{Introduction}\label{se1}
%%%%%%%%%%%%%%%%%%%%%%%%%%%%%%%%%%%%%%%%%%
We start by introducing some basic notations which will be used 
throughout the paper. The first important object is the set $\fK_{d}$ which parametrizes families of multivariate Krawtchouk polynomials. 
In order to motivate the definition of $\fK_{d}$,  let us first consider positive real numbers $\{p_j\}_{j=0}^{d}$
such that $p_0+p_1+\cdots+p_d=1$ and let
$P=\diag(p_0,p_1,\dots,p_d)$ be the corresponding diagonal matrix. Let $w_0,w_1,\dots,w_d$ denote mutually orthogonal vectors in $\Rset^{d+1}$ with respect to the inner product  $\langle \alpha,\beta\rangle=\alpha^t P \beta$
such that $w_0=(1,1,\dots,1)^t$ and the $0$-th coordinate of $w_j$ is
$1$ for $j=1,2,\dots,d$. If we denote by $U$ the matrix with columns  $w_0,w_1,\dots,w_d$ then $Q=U^tPU$ is a nonsingular diagonal 
matrix whose $(0,0)$ entry is equal to $1$.
If we set $\Pt=p_0 Q^{-1}$, then 
$\Pt=\diag(p_0,\pt_1,\pt_2,\dots,\pt_d)$ is also diagonal and 
$$\frac{1}{p_0} PU\Pt U^{t}=I_{d+1},$$
where  $I_{d+1}$ denotes the identity $(d+1)\times (d+1)$ matrix. The above construction is closely 
related to the one by Griffiths \cite{G}.
It shows how to build matrices $U$ and $\Pt$ starting from $P$. 
In the present paper we will continue to work with the matrices 
$P, \Pt, U$ but we would like to  put $P$ and $\Pt$ on 
the same footing. 
Moreover, we would like to  work over $\Cset$ instead of $\Rset$. This leads to the following formal definition which extracts the important properties of the matrices $P, \Pt$  and $U$ needed in the paper. 
\begin{Definition}\label{de1.1} 
Let  $\fK_{d}$ denote 
the set of 4-tuples
$(\nu,P,\Pt,U)$,
where $\nu$ is a nonzero complex number 
and $P,\Pt, U$ are $(d+1)\times (d+1)$ matrices with complex entries
satisfying the following conditions:
\begin{itemize}
\item[{(i)}] $P=\diag(p_0,p_1,\dots,p_{d})$ and 
$\Pt=\diag(\pt_0,\pt_1,\dots,\pt_{d})$ are diagonal and
$p_0=\pt_0=\frac{1}{\nu}$;
\item[{(ii)}] $U=(u_{i,j})_{0\leq i,j\leq d}$ is such that 
$u_{0,j}=u_{j,0}=1$ for all $j=0,1,\dots,d$, i.e. 
\begin{equation}\label{1.1}
U=\left(\begin{matrix}1 & 1 & 1 & \dots &1\\ 
1 & u_{1,1} &u_{1,2} &\dots & u_{1,d}\\
 \vdots & \\
 1 & u_{d,1} & u_{d,2}  &\dots &u_{d,d}
\end{matrix}\right);
\end{equation}
\item[{(iii)}]  The following matrix equation holds
\begin{equation}\label{1.2}
\nu PU\Pt U^{t}=I_{d+1}. 
\end{equation}
\end{itemize}
\end{Definition}
We note that the points in $\fK_{d}$ arise naturally from the so called character algebras \cite{BI}. In particular, Hecke algebras of Gelfand pairs, or more generally, the Bose-Mesner algebras of commutative association schemes are examples of character algebras. From the above definition it is easy to see that $p_j$ and $\pt_j$ are nonzero numbers, such that
$$\sum_{j=0}^{d}p_j=\sum_{j=0}^{d}\pt_j=1.$$

Griffiths \cite{G} used a generating function
to construct $d$-variable Krawtchouk polynomials
for every point $\kappa\in\fK_{d}$ and positive integer $N$. His construction is as follows. 
For $m=(m_1,m_2,\dots,m_d)\in\Nset_0^d$ and $\mt=(\mt_1,\mt_2,\dots,\mt_d)\in\Nset_0^d$  such that $m_1+m_2+\cdots+m_d\leq N$,  $\mt_1+\mt_2+\cdots+\mt_d\leq N$ define polynomials $\cP(m,\mt;\kappa,N)=\cP(m,\mt)$ in the variables $\mt_1,\dots,\mt_d$  with degree indices $m_1,\dots,m_d$ by
$$\prod_{i=0}^{d}\left(1+\sum_{j=1}^{d}u_{i,j}z_j\right)^{\mt_i}=\sum_{\begin{subarray}{c} m \in\Nset_0^{d}\\ m_1+\cdots +m_d \leq N\end{subarray}}
\frac{N!}{m_0!m_1!\cdots m_d!}\cP(m,\mt)z_1^{m_1}\cdots z_d^{m_d},$$
where $m_0=N-m_1-m_2-\cdots-m_d$ and $\mt_0=N-\mt_1-\mt_2-\cdots-\mt_d$.

Mizukawa and Tanaka \cite{MT} gave an explicit formula for $\cP(m,\mt)$
in terms of the Gelfand hypergeometric series 
\begin{equation}\label{1.3}
\cP(m,\mt)=\sum_{A=(a_{i,j})\in\MdN}\frac{\prod_{j=1}^{d}(-m_j)_{\sum_{i=1}^{d}a_{i,j}}\, \prod_{i=1}^{d}(-\mt_i)_{\sum_{j=1}^{d}a_{i,j}}}{(-N)_{\sum_{i,j=1}^{d}a_{i,j}}}
\prod_{i,j=1}^{d}\frac{\omega_{i,j}^{a_{i,j}}}{a_{i,j}!},
\end{equation}
where $\omega_{i,j}=1-u_{i,j}$. In the above formula
$\MdN$ denotes the set of all
$d\times d$ matrices $A=(a_{i,j})$ with
nonnegative integer entries, 
such that $\sum_{i,j=1}^{d}a_{i,j}\leq N$. 
One advantage of equation \eqref{1.3} is the transparent symmetry between $m$ and $\mt$. 

In the present paper, we define two Cartan subalgebras for $\sld(\Cset)$, denoted $H$ and $\Ht$, which depend on $\kappa$. We display an antiautomorphism $\fa$ of $\sld(\Cset)$ that fixes each element of $H$ and each element of $\Ht$. We consider a certain finite-dimensional irreducible $\sld(\Cset)$-module $V$ consisting of homogeneous polynomials in $(d+1)$ variables of total degree $N$. We define a nondegenerate symmetric bilinear form $\langle \,,\,\rangle$ on $V$ such that $\langle \beta. \xi,\eta\rangle = \langle \xi,\fa(\beta). \eta\rangle$
for all $ \beta \in\sld(\Cset)$ and $\xi,\eta \in V$. We define also two bases for $V$; one diagonalizes $H$ and the other diagonalizes $\Ht$. Both bases are orthogonal with respect to $\langle \,,\,\rangle$.
We show that when $\langle\,,\,\rangle$ is applied to a vector in each basis, the result is 
a trivial factor times one of the polynomials $\cP(m,\mt)$ defined in \eqref{1.3}. Thus the transition matrices between the bases are described by the polynomials \eqref{1.3}. From these results we recover the previously known orthogonality relation proved in \cite{G,MT}. 

Our approach naturally leads to two sets of $d$ recurrence relations for the polynomials \eqref{1.3}, parametrized by the Cartan algebras $H$ and $\Ht$. Equivalently, we can interpret these recurrence relations as two commutative algebras of difference operators, each generated by $d$ algebraically independent operators diagonalized by the polynomials $\cP(m,\mt)$. One of the algebras consists of difference operators acting on the variables $m$ and the other one on the variables $\mt$.  Following the terminology established in the literature starting with \cite{DG} we can say that the polynomials $\cP(m,\mt)$ solve a discrete-discrete bispectral problem. 
The two algebras can be connected via a natural (bispectral) involution $\fb$  on $\fK_{d}$ defined as follows:
$$\fb:\kappa=(\nu,P,\Pt,U)\rightarrow \fb(\kappa)=(\nu,\Pt,P,U^{t}).$$
This map exchanges the roles $m$ and $\mt$ and corresponds to a symmetry between the degree parameters and the variables of the polynomials.

At the end of the paper, we illustrate how our theory applies to multivariate Krawtchouk polynomials, which have appeared in different applications in the literature. The first example explains how the present work extends our joint paper with Terwilliger \cite{IT} related to the bivariate polynomials defined in \cite{HR}. The next example concerns the polynomials defined by Milch \cite{M} more than 40 years ago within the context of multi-dimensional growth birth and death processes. We fix the matrix $P$ and we exhibit explicit matrices $\Pt$ and $U$, satisfying the conditions in \deref{de1.1}. Our constructions lead to the bispectral commutative algebras of difference operators obtained recently in \cite[Section~5.4]{GI}. The bispectral involution $\fb$ above corresponds to the mapping $\mathfrak{f}$ in \cite[page~450, eq. (5.22)]{GI}. In the last example we explain how the polynomials discussed in \cite{DS} fit within our framework.

The results of the present paper  yield a family of solutions to Problem~7.1 in \cite{IT}. Each of these solutions can be viewed as a rank $d$ generalization of Leonard pair. The Leonard pairs are defined and classified in \cite{T1}. For more information on Leonard pairs see \cite{T2}.

%%%%%%%%%%%%%%%%%%%%%%%%%%%%%%%%%%%%%%%%%% Section 2
\section{Cartan subalgebras of $\sld(\Cset)$}\label{se2}
%%%%%%%%%%%%%%%%%%%%%%%%%%%%%%%%%%%%%%%%%%

For $i,j\in\{0,1,\dots,d\}$, let $e_{i,j}$ denote the $(d+1)\times(d+1)$ matrix that has $(i,j)$-entry $1$ and all other entries $0$. We denote by $H$ the standard Cartan subalgebra of $\sld(\Cset)$ consisting of all diagonal matrices with basis $\{\phi_1,\phi_2,\dots,\phi_{d}\}$ where 
\begin{equation}\label{2.1}
\phi_{i}=e_{i,i}-\frac{1}{d+1}I_{d+1}, \text{ for }i=1,2,\dots,d.
\end{equation}

Let $R$ denote the $(d+1)\times(d+1)$ matrix given by 
\begin{equation}\label{2.2}
R=\thetat \Pt U^{t},
\end{equation}
where $\thetat\in\Cset$ is such that $\det(R)=1$. We consider the automorphism $\Ad_R$ on $\sld(\Cset)$ defined by $\Ad_R(\beta)=R\beta R^{-1}$ for every $\beta\in\sld(\Cset)$.  We denote by $\phit_k$ and $\et_{i,j}$ the images of $\phi_k$ and $e_{i,j}$ under $\Ad_R$, i.e. we set
\begin{equation}\label{2.3}
\phit_{k}=R\phi_kR^{-1}, \quad \et_{i,j}=Re_{i,j}R^{-1}
\end{equation}
for $k=1,2,\dots,d$ and $i,j\in\{0,1,\dots,d\}$. Let 
\begin{equation}\label{2.4}
\Ht=\Span\{\phit_1,\phit_2,\dots,\phit_d\}
\end{equation}
denote the conjugated Cartan subalgebra of $\sld(\Cset)$. From \eqref{1.2} and \eqref{2.2} we see that 
\begin{equation}\label{2.5}
R^{-1}=\theta PU, \text{ where }\theta=\frac{\nu}{\thetat}.
\end{equation}
Using the above equations we can easily expand $\phit_i$ in terms of the basis 
$\{\phi_j,e_{k,l}\}$, $j\in\{1,2,\dots,d\}$, $k\neq l\in\{0,1,\dots,d\}$ of $\sld(\Cset)$ as follows
\begin{equation}\label{2.6}
\phit_i=\nu\sum_{0\leq k\neq l\leq d}p_{i}\pt_{k}u_{i,k}u_{i,l}e_{k,l}+\sum_{j=1}^{d}p_{i}(\nu \pt_{j}u_{i,j}^2-1)\phi_j,
\end{equation}
for $i=1,2,\dots,d$. Likewise, we can also expand $\phi_i$ in terms of the dual basis $\{\phit_j,\et_{k,l}\}$, $j\in\{1,2,\dots,d\}$, $k\neq l\in\{0,1,\dots,d\}$ of $\sld(\Cset)$:
\begin{equation}\label{2.7}
\phi_i=\nu\sum_{0\leq k\neq l\leq d}\pt_{i}p_{k}u_{k,i}u_{l,i}\et_{k,l}+\sum_{j=1}^{d}\pt_{i}(\nu p_{j}u_{j,i}^2-1)\phit_j,
\end{equation}
for $i=1,2,\dots,d$.

Next we define an antiautomorphism $\fa$ on $\sld(\Cset)$ by 
\begin{equation}\label{2.8}
\fa(\beta)=\Pt\beta^{t} \Pt^{-1} \text{ for every }\beta\in\sld(\Cset).
\end{equation}
Note that $\fa$ is an involution, i.e. $\fa\circ \fa=\Id$. Using that fact that $P$ and $\Pt$ are diagonal matrices and \eqref{1.2} one can check the following lemma.
\begin{Lemma}\label{le2.1}
We have
\begin{align}
\fa(\phi_i)&=\phi_i & \fa(\phit_i)&=\phit_i& &\text{ for all }i=1,2,\dots,d\label{2.9}\\
\fa(e_{i,j})&=\frac{\pt_j}{\pt_i}e_{j,i} & \fa(\et_{i,j})&=\frac{p_j}{p_i}\et_{j,i} & &\text{ for all }0\leq i\neq j\leq d.\label{2.10}
\end{align}
In particular, $\fa$ preserves the Cartan subalgebras $H$ and $\Ht$. 
\end{Lemma}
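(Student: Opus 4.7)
The plan is to verify each identity by direct matrix computation, exploiting the diagonality of $P,\Pt$ together with the two presentations $R=\thetat\Pt U^t$ and $R^{-1}=\theta PU$ provided by \eqref{2.2} and \eqref{2.5}.

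First I would dispatch the untilded identities, which are immediate. Since $\phi_i$ is diagonal, $\phi_i^t=\phi_i$ commutes with $\Pt$, so $\fa(\phi_i)=\Pt\phi_i\Pt^{-1}=\phi_i$. For $i\neq j$, $e_{i,j}^t=e_{j,i}$; left multiplication by $\Pt$ scales the unique nonzero entry by $\pt_j$ and right multiplication by $\Pt^{-1}$ scales it by $1/\pt_i$, so $\fa(e_{i,j})=(\pt_j/\pt_i)e_{j,i}$.

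For the tilded identities, the key preparation is to transpose the two presentations of $R^{\pm 1}$, obtaining $R^t=\thetat U\Pt$ and $(R^{-1})^t=\theta U^tP$ (both simplifications use the symmetry of the diagonal matrices $P,\Pt$). Then for any $\beta\in\sld(\Cset)$,
\[
\fa(R\beta R^{-1})=\Pt(R^{-1})^t\beta^tR^t\Pt^{-1}=\theta\thetat\,\Pt U^tP\beta^tU,
\]
after the trailing $\Pt$ in $R^t$ cancels against $\Pt^{-1}$, whereas
\[
R\beta R^{-1}=\theta\thetat\,\Pt U^t\beta PU.
\]
Thus both tilded claims reduce to comparing $P\beta^t$ with $\beta P$. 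For $\beta=\phi_i$ diagonal, $\beta^t$ and $P$ commute and the two expressions coincide, giving $\fa(\phit_i)=\phit_i$. For $\beta=e_{i,j}$ one has $\beta^t=e_{j,i}$, and the diagonal action $Pe_{j,i}=p_je_{j,i}$ versus $e_{j,i}P=p_ie_{j,i}$ produces precisely the ratio $p_j/p_i$, so $\fa(\et_{i,j})=(p_j/p_i)\et_{j,i}$.

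The only non-bookkeeping step is the choice of representation of $R^{-1}$: it is \eqref{1.2} that lets us replace $R^{-1}$ by $\theta PU$ (in terms of $P$ and $U$, not $U^{-1}$), and this is what allows $(R^{-1})^t$ to interact cleanly with the $\Pt$-conjugation defining $\fa$. Once the fixed-point statements on the bases $\{\phi_i\}$ and $\{\phit_i\}$ are established, the preservation of $H$ and $\Ht$ is automatic from \eqref{2.1} and \eqref{2.4}.
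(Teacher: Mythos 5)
Your computation is correct, and it is exactly the verification the paper leaves to the reader ("using the fact that $P$ and $\Pt$ are diagonal and \eqref{1.2}, one can check"): the untilded identities follow from diagonality of $\Pt$, and the tilded ones from transposing $R=\thetat\Pt U^t$, $R^{-1}=\theta PU$ and comparing $P\beta^t$ with $\beta P$. Nothing is missing; the preservation of $H$ and $\Ht$ indeed follows immediately since $\fa$ fixes the spanning sets $\{\phi_i\}$ and $\{\phit_i\}$.
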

\begin{Lemma}\label{le2.2}
The Cartan subalgebras $H$ and $\Ht$ together generate $\sld(\Cset)$.
\end{Lemma}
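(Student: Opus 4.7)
Let $\mathfrak{g}$ denote the Lie subalgebra of $\sld(\Cset)$ generated by $H\cup\Ht$; the goal is to prove $\mathfrak{g}=\sld(\Cset)$. My first step is to exploit the inclusion $H\subset\mathfrak{g}$ in order to decompose $\mathfrak{g}$ along the root-space decomposition of $\sld(\Cset)$ under $\mathrm{ad}(H)$. Since $\mathfrak{g}$ is a Lie subalgebra containing $H$, it is $\mathrm{ad}(H)$-stable. The $\mathrm{ad}(H)$-weight spaces of $\sld(\Cset)$ are $H$ itself together with the one-dimensional lines $\Cset e_{k,l}$ for $0\le k\ne l\le d$, and the corresponding weights are pairwise distinct. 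Hence Lagrange interpolation in the commuting operators $\mathrm{ad}(\phi_1),\dots,\mathrm{ad}(\phi_d)$ projects any element of $\mathfrak{g}$ onto each individual weight line, giving
$$\mathfrak{g}=H\oplus\bigoplus_{(k,l)\in S}\Cset e_{k,l}$$
for some set $S$ of pairs, and the task reduces to showing that $S$ exhausts all pairs $k\ne l$.

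The second step is to apply this decomposition to the generators $\phit_i\in\mathfrak{g}$ by means of \eqref{2.6}. The projection of $\phit_i$ onto $\Cset e_{k,l}$ equals $\nu p_i\pt_k u_{i,k}u_{i,l}e_{k,l}$, and since $\nu,p_i,\pt_k$ are all nonzero, one obtains $e_{k,l}\in\mathfrak{g}$ as soon as $u_{i,k}u_{i,l}\ne 0$ for some $i\in\{1,\dots,d\}$.

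The crux is then to produce such an $i$ for every pair $k\ne l$, and this is where \eqref{1.2} enters. Taking inverses in \eqref{1.2} (multiply it on the left by $P^{-1}$ and then invert) yields the dual identity $\nu U^{t}PU=\Pt^{-1}$, whose vanishing off-diagonal entries read
$$\sum_{i=0}^{d}p_i\,u_{i,k}\,u_{i,l}=0\qquad\text{for every }k\ne l.$$
Separating off the $i=0$ summand and using $u_{0,k}=u_{0,l}=1$ turns this into
$$\sum_{i=1}^{d}p_i\,u_{i,k}\,u_{i,l}=-p_0=-\tfrac{1}{\nu}\ne 0,$$
so at least one product $u_{i,k}u_{i,l}$ with $1\le i\le d$ must be nonzero.

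Combining the three steps, every root line $\Cset e_{k,l}$ lies in $\mathfrak{g}$, so $\mathfrak{g}\supseteq H\oplus\bigoplus_{k\ne l}\Cset e_{k,l}=\sld(\Cset)$. I do not expect any serious obstacle here; the only real insight is the last step, where one must recognize \eqref{1.2} as expressing the $P$-orthogonality of the \emph{columns} of $U$, rather than the $\Pt$-orthogonality of the \emph{rows} of $U$, which is the form the identity wears at first glance.
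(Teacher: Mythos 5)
Your proof is correct, but it follows a genuinely different route from the paper. You keep all the generators $\phit_1,\dots,\phit_d$, decompose the subalgebra $\mathfrak{g}\supseteq H$ along the root-space decomposition of $\sld(\Cset)$ under $\mathrm{ad}(H)$ (spectral projections being polynomials in the commuting operators $\mathrm{ad}(\phi_1),\dots,\mathrm{ad}(\phi_d)$), read off the coefficient of $e_{k,l}$ in $\phit_i$ from \eqref{2.6}, and then invoke the dual identity $\nu U^{t}PU=\Pt^{-1}$ (correctly derived from \eqref{1.2}) to find, for each $k\neq l$, some $i\geq 1$ with $u_{i,k}u_{i,l}\neq 0$; all steps check out, including the use of $p_j,\pt_j\neq 0$. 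The paper instead works with the single element $\phit_0=-\sum_{j=1}^{d}\phit_j\in\Ht$, computes it explicitly as the matrix with $(k,l)$-entry $\pt_k$ (minus a multiple of the identity), and exhibits the explicit commutator identity $e_{i,j}=\bigl([\phi_{j},[\phi_{i},[\phi_{j},\phit_0]]]-[\phi_{i},[\phi_{j},\phit_0]]\bigr)/(2\pt_i)$, so it needs neither the root-space projection machinery nor the column-orthogonality relation. It is worth noting that your crux step is equivalent to the paper's observation in disguise: summing \eqref{2.6} over $i=1,\dots,d$ shows the coefficient of $e_{k,l}$ in $-\phit_0$ is $\nu\pt_k\sum_{i=1}^{d}p_iu_{i,k}u_{i,l}=-\pt_k\neq 0$, so you could have avoided the detour through $\nu U^{t}PU=\Pt^{-1}$ by projecting $\phit_0$ alone. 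What your approach buys is robustness and transparency about where nondegeneracy enters (only $\nu,p_i,\pt_k\neq 0$ plus the dual orthogonality), while the paper's proof is more elementary and constructive, producing each $e_{i,j}$ by an explicit bracket formula.
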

\begin{proof}
Consider
$$\phi_0=-\sum_{j=1}^{d}\phi_j=e_{0,0}-\frac{1}{d+1}I_{d+1}\in H$$
and
$$\phit_0=-\sum_{j=1}^{d}\phit_j=Re_{0,0}R^{-1}-\frac{1}{d+1}I_{d+1}\in \Ht.$$
Using equations \eqref{2.2}, \eqref{2.5} and the explicit form of the matrices $P$, $\Pt$ and $U$ we see that 
\begin{align*}
\phit_0&= \nu\Pt U^{t}e_{0,0}PU-\frac{1}{d+1}I_{d+1}\\
&=\left(\begin{matrix}\pt_{0} & \pt_{0} & \pt_{0} & \dots &\pt_{0}\\ 
\pt_{1} & \pt_{1} &\pt_{1} &\dots & \pt_{1}\\
 \vdots & \\
\pt_{d} & \pt_{d} & \pt_{d} & \dots &\pt_{d}
\end{matrix}\right)-\frac{1}{d+1}I_{d+1}.
\end{align*}
From the last equation it follows easily that for $0\leq i\neq j\leq d$ we have
$$e_{i,j}=\frac{[\phi_{j},[\phi_{i},[\phi_{j},\phit_0]]]-[\phi_{i},[\phi_{j},\phit_0]]}{2\pt_i},$$
completing the proof.
\end{proof}

%%%%%%%%%%%%%%%%%%%%%%%%%%%%%%%%%%%%%%%%%% Section 3
\section{An $\sld(\Cset)$-module}\label{se3}
%%%%%%%%%%%%%%%%%%%%%%%%%%%%%%%%%%%%%%%%%%
Let $x_0,x_1,\dots,x_{d}$ denote mutually commuting variables. We set 
$x=(x_0,x_1,\dots,x_{d})$ and we denote by $v_0,v_1,\dots,v_{d}$ the standard basis of 
$\Cset^{d+1}$. Throughout the paper we use multi-index notation. For instance, $\Cset[x]$ stands for the $\Cset$-algebra consisting of all polynomials in 
$x_0,x_1,\dots,x_{d}$ and if $\lambda=(\lambda_0,\lambda_1,\dots,\lambda_{d})\in\Nset_0^{d+1}$ then 
$$x^{\lambda}=x_0^{\lambda_0}x_1^{\lambda_1}\cdots x_{d}^{\lambda_{d}},\quad
\lambda!=\lambda_0!\lambda_1!\cdots\lambda_{d}!,$$
$|\lambda|=\lambda_0+\lambda_1+\cdots+\lambda_{d}$, etc. We shall denote by $p=(p_0,p_1,\dots,p_{d})$ and  $\pt=(\pt_0,\pt_1,\dots,\pt_{d})$ the vectors whose components are the entries of the diagonal matrices $P$ and $\Pt$ in \deref{de1.1}.

Consider the representation  $\rho:\sld(\Cset)\rightarrow \gl(\Cset[x])$ of $\sld(\Cset)$ defined by
\begin{equation}\label{3.1}
\begin{split}
&\rho(e_{i,j})=x_i\pd_{x_{j}}\\
&\rho(e_{i,i}-e_{j,j})=x_i\pd_{x_{i}}-x_j\pd_{x_{j}},
\end{split}
\end{equation}
for $i\neq j$.
Thus every element of $\sld(\Cset)$ acts as a derivation. Let 
$$\Iset=\{\lambda\in\Nset_0^{d+1}:|\lambda|=N\}$$
and let $V$ denote the subspace of $\Cset[x]$ consisting of homogeneous polynomials of total degree $N$, i.e.
\begin{equation}\label{3.2}
V=\Span\{x^{\lambda}:\lambda\in\Iset\}.
\end{equation}
Since the operators in \eqref{3.1} preserve the total degree of the polynomials, 
we can consider $V$ as an $\sld(\Cset)$-submodule of $\Cset[x]$ and it is easy to see that this module 
is irreducible. In particular, from \leref{le2.2} it follows that there is no proper subspace 
$W$ of $V$ such that $HW\subseteq W$ and  $\Ht W\subseteq W$.

Since
\begin{align}
\phi_i.x^{\lambda}&= \left(\lambda_{i}-\frac{N}{d+1}\right)x^{\lambda}& &\text{for }i=1,2,\dots,d,\label{3.3}\\
e_{i,j}.x^{\lambda}&= \lambda_{j}x^{\lambda+v_i-v_j} &&\text{for }0\leq i\neq j\leq d,\label{3.4}
\end{align}
we see that 
\begin{equation}\label{3.5}
V=\bigoplus_{\lambda\in\Iset}V_{\lambda}, \text{ where }V_{\lambda}=\Span\{x^{\lambda}\}
\end{equation}
is the weight decomposition of $V$ with respect to $H$. To describe the weight decomposition of $V$ with respect to $\Ht$ we need to make a change of variables. Let us define $\xt=(\xt_0,\xt_1,\dots,\xt_{d})$ by
\begin{equation}\label{3.6}
\xt=x R,
\end{equation}
where $R$ is the matrix given in \eqref{2.2}. From equations \eqref{2.3}, \eqref{3.3} and \eqref{3.4} it follows that 
\begin{align}
\phit_i.\xt^{\lambda}&= \left(\lambda_{i}-\frac{N}{d+1}\right)\xt^{\lambda}& &\text{for }i=1,2,\dots,d,\label{3.7}\\
\et_{i,j}.\xt^{\lambda}&= \lambda_{j}\xt^{\lambda+v_i-v_j} &&\text{for }0\leq i\neq j\leq d.\label{3.8}
\end{align}
Thus
\begin{equation}\label{3.9}
V=\bigoplus_{\lambda\in\Iset} \Vt_{\lambda}, \text{ where }\Vt_{\lambda}=\Span\{\xt^{\lambda}\}
\end{equation}
is the weight decomposition of $V$ with respect to $\Ht$. 

\begin{Remark}\label{re3.1}
We comment on how $H$ and $\Ht$ act 
on the  weight spaces of the other one.
A pair of elements $\lambda$ and $\mu$ in $\Iset$
will be called {\it adjacent} whenever
$\lambda-\mu$ is a permutation of
$(1,-1,0,0,\dots,0)\in\Cset^{d+1}$.
Then $H$ and $\Ht$ act on the weight spaces of the
other one as follows.
For all $\lambda\in\Iset$,
\begin{equation*}
\Ht V_\lambda \subseteq  V_\lambda + 
\sum_{\genfrac{}{}{0pt}{} {\mu \in \mathbb I}{\mu \;{\rm adj}\;\lambda}}
V_\mu,
\qquad \qquad 
H \Vt_\lambda \subseteq  \Vt_\lambda + 
\sum_{\genfrac{}{}{0pt}{} {\mu \in \mathbb I}{\mu \;{\rm adj}\;\lambda}}
\Vt_\mu.
\end{equation*}
\end{Remark}

We conclude the section by writing explicit formulas for the entries of $\xt$, which will be needed later. Using \eqref{3.6}, the notations in \deref{de1.1} and \eqref{2.2} we obtain
\begin{align}
&\xt_0=\thetat \sum_{j=0}^{d}\pt_jx_j,\label{3.10}\\
&\xt_{k}=\thetat\pt_0x_0+\thetat \sum_{j=1}^{d}u_{k,j}\pt_jx_j \text{ for }k=1,2,\dots,d.\label{3.11}
\end{align}
Recall that $\omega_{i,j}=1-u_{i,j}$ and therefore, subtracting \eqref{3.10} from \eqref{3.11} we 
can rewrite equations \eqref{3.11} as follows
\begin{equation}\label{3.12}
\xt_{k}=\xt_0-\thetat \sum_{j=1}^{d}\pt_{j}\omega_{k,j}x_{j}, \text{ for }k=1,2,\dots,d.
\end{equation}

%%%%%%%%%%%%%%%%%%%%%%%%%%%%%%%%%%%%%%%%%% Section 4
\section{A bilinear form}\label{se4}
We define a symmetric bilinear form $\langle,\rangle$ on $V$ by
\begin{equation}\label{4.1}
\langle x^n, x^m\rangle=\delta_{n,m}\frac{n!}{\pt^{n}}\theta^{N} \text{ for all }n,m\in\Iset.
\end{equation}
Using the explicit formulas for the action of $\fa$ on the basis $\{\phi_j,e_{k,l}\}$, $j\in\{1,2,\dots,d\}$, $k\neq l\in\{0,1,\dots,d\}$ of $\sld(\Cset)$ in \leref{le2.1} it is easy to check that 
\begin{equation}\label{4.2}
\langle  \beta. \xi, \eta \rangle=\langle  \xi, \fa(\beta). \eta \rangle \text{ for all }\beta\in\sld(\Cset), \quad \xi,\eta\in V.
\end{equation}
We prove next that the vectors $\xt^{n}$ are also mutually orthogonal with respect to $\langle,\rangle$.
\begin{Lemma}\label{le4.1}
For $n,m\in\Iset$ we have
\begin{equation}\label{4.3}
\langle \xt^n, \xt^m\rangle=\delta_{n,m}\frac{n!}{p^{n}}\thetat^{N}.
\end{equation}
\end{Lemma}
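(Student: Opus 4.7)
The plan is to handle orthogonality and the explicit norm separately, using the $\fa$-invariance \eqref{4.2} with elements of $\Ht$ and the raising/lowering operators $\et_{i,j}$, exactly mirroring how one would verify \eqref{4.1} on the basis $\{x^\lambda\}$ using $H$ and $e_{i,j}$.

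First, orthogonality. Apply \eqref{4.2} with $\beta = \phit_i$. By \leref{le2.1} we have $\fa(\phit_i) = \phit_i$, and by \eqref{3.7} the vectors $\xt^n$ and $\xt^m$ are eigenvectors of $\phit_i$ with eigenvalues $n_i - N/(d+1)$ and $m_i - N/(d+1)$. This yields $(n_i - m_i)\langle\xt^n,\xt^m\rangle = 0$ for each $i=1,\dots,d$. If $n \neq m$ in $\Iset$, the constraint $|n|=|m|=N$ forces $n_i \neq m_i$ for some $i \geq 1$, so $\langle\xt^n,\xt^m\rangle=0$.

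Second, a recursion for $f(n) := \langle\xt^n,\xt^n\rangle$. Apply \eqref{4.2} to $\beta = \et_{i,j}$ paired with the vectors $\xt^n$ and $\xt^{n+v_i-v_j}$. Using \eqref{3.8} and the formula $\fa(\et_{i,j}) = (p_j/p_i)\et_{j,i}$ from \leref{le2.1}, together with the orthogonality just established, both sides of \eqref{4.2} evaluate to multiples of $f$, giving
\begin{equation*}
n_j\, f(n + v_i - v_j) = \frac{p_j(n_i+1)}{p_i}\, f(n).
\end{equation*}
A direct computation shows that $g(n) := n!/p^n$ obeys the identical recursion. Since the moves $n \mapsto n+v_i-v_j$ act transitively on $\Iset$, it follows that $f(n) = C\cdot g(n)$ for a single constant $C \in \Cset$.

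Finally, pin down $C$ by evaluating at $n = Nv_0$. By \eqref{3.10} and the multinomial theorem,
\begin{equation*}
\xt_0^N = \thetat^N \sum_{|\lambda|=N} \frac{N!}{\lambda!}\, \pt^{\lambda}\, x^\lambda.
\end{equation*}
Inserting this into \eqref{4.1} and using $\sum_j \pt_j = 1$ gives $f(Nv_0) = N!\,\thetat^{2N}\theta^N$, while $g(Nv_0) = N!/p_0^N = N!\,\nu^N$ by \deref{de1.1}(i). The identity $\theta\thetat = \nu$ from \eqref{2.5} then yields $C = \thetat^N$, which is exactly the claim. The main subtle point is this last constant: the value $\thetat^N$ (the exact analogue of the $\theta^N$ in \eqref{4.1} under the swap $p\leftrightarrow\pt$, $\theta\leftrightarrow\thetat$) depends critically on the normalization $p_0 = \pt_0 = 1/\nu$ and on the choice of $\thetat$ making $\det(R)=1$ in \eqref{2.2}; steps one and two, by contrast, are formal consequences of \eqref{4.2}.
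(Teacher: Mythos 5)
Your proof is correct and follows essentially the same route as the paper: orthogonality from \eqref{4.2} with the $\fa$-fixed elements $\phit_i$, a norm recursion obtained by pairing $\et_{i,j}$ against adjacent monomials $\xt^n$, $\xt^{n+v_i-v_j}$, and the anchor value $\|\xt_0^N\|^2$ computed from \eqref{3.10}, \eqref{4.1} and $\theta\thetat=\nu$. The only difference is cosmetic: the paper organizes the recursion (with $i=0$ only) as an induction on $n_1+\cdots+n_d$, whereas you phrase it as ``$f$ and $g=n!/p^n$ satisfy the same recursion, the moves act transitively, hence $f=Cg$'' and then fix $C$ at $n=Nv_0$.
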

\begin{proof}
For $n\neq m$, equation \eqref{4.3} follows immediately from \eqref{4.2}, the fact that $\fa$ preserves $\Ht$ and \eqref{3.7}. Thus it remains to  consider the case $n=m$, i.e. we need to show that
\begin{equation}\label{4.4}
||\xt^n||^2=\langle \xt^n, \xt^n\rangle=\frac{n!}{p^{n}}\thetat^{N}.
\end{equation}
 We prove that \eqref{4.4} is true by induction on 
$n_1+n_2+\cdots+n_{d}$.
If $n_1+n_2+\cdots+n_{d}=0$ then $n=(N,0,0,\dots,0)$ and we need to show that 
$||\xt_0^N||^2=N!\thetat^{N}/p_0^N$. 
Taking the $N$th power of equation \eqref{3.10} and using \eqref{4.1} we find:
\begin{align*}
||\xt_0^N||^2&=\thetat^{2N}\left\| \left(\sum_{j=0}^{d}\pt_{j}x_j \right)^N\right\|^2 
=\thetat^{2N}\left\| \sum_{\alpha\in\Iset}\frac{N!}{\alpha!}\pt^{\alpha}x^{\alpha}\right\| ^2\\
&=\thetat^{2N}\sum_{\alpha\in\Iset}\frac{(N!)^2}{(\alpha!)^2}\pt^{2\alpha}||x^{\alpha}||^2
=\thetat^{2N}\sum_{\alpha\in\Iset}\frac{(N!)^2}{\alpha!}\pt^{\alpha}\theta^N\\
&=\thetat^{N}(\thetat\theta)^NN!\sum_{\alpha\in\Iset}\frac{N!}{\alpha!}\pt^{\alpha}
=\frac{\thetat^N}{p_0^N}N!,
\end{align*}
completing the proof in this case. Suppose now $n_1+n_2+\cdots+n_{d}>0$. Then 
$n_j>0$ for some $j\in\{1,\dots,d\}$ and we consider the equality
\begin{equation*}
\langle \et_{0,j}.\xt^n,\xt^{n+v_0-v_j} \rangle = \langle \xt^n,\fa(\et_{0,j}).\xt^{n+v_0-v_j} \rangle.
\end{equation*}
From \eqref{3.8} it follows that the left-hand side equals $n_j||\xt^{n+v_0-v_j}||^2$. From \eqref{2.10} and \eqref{3.8} we see that the right-hand side is $\frac{p_j}{p_0}(n_0+1)||\xt^{n}||^2$. Therefore, we have
\begin{equation*}
||\xt^{n}||^2=\frac{p_0}{p_j}\frac{n_j}{(n_0+1)}||\xt^{n+v_0-v_j}||^2,
\end{equation*}
completing the proof by induction.
\end{proof}

%%%%%%%%%%%%%%%%%%%%%%%%%%%%%%%%%%%%%%%%%% Section 5
\section{The Krawtchouk polynomials and $\sld(\Cset)$}\label{se5}

We are now ready to prove that the transition matrices between the bases $\{x^n:n\in\Iset\}$ and 
$\{\xt^n:n\in\Iset\}$ can be expressed in terms of the multivariate Krawtchouk polynomials \eqref{1.3}. 
To state the theorem we shall use the following convention: for a vector 
$w=(w_0,w_1,\dots,w_{d})\in\Cset^{d+1}$ we denote by $w'=(w_1,\dots,w_{d})\in\Cset^{d}$.

\begin{Theorem}\label{th5.1}
With the above convention we have
\begin{subequations}\label{5.1}
\begin{align}
&\frac{\xt^{\nt}}{\thetat^N}=N!\sum_{n\in\Iset}\cP(n',\nt')\frac{\pt^{n}}{n!}x^{n}\quad \text{ for }\nt\in\Iset\label{5.1a}\\
\intertext{and}
&\frac{x^{n}}{\theta^N}=N!\sum_{\nt\in\Iset}\cP(n',\nt')\frac{p^{\nt}}{\nt!}x^{\nt}\quad \text{ for }n\in\Iset. \label{5.1b}
\end{align}
\end{subequations}
\end{Theorem}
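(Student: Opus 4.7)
The plan is to derive \eqref{5.1a} by a direct computation matching the generating function definition of $\cP(m,\mt)$, and then bootstrap \eqref{5.1b} from \eqref{5.1a} using the bilinear form $\langle\,,\,\rangle$.

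For \eqref{5.1a}, the starting point is the explicit expressions \eqref{3.10}--\eqref{3.11}. The key observation is that, factoring $\tilde\theta\tilde p_0 x_0$ out of $\xt_i$ for every $i$ (using $u_{0,j}=1$ so that \eqref{3.10} fits the same pattern as \eqref{3.11}), one gets
\begin{equation*}
\frac{\xt_i}{\thetat\pt_0 x_0}=1+\sum_{j=1}^{d}u_{i,j}z_j,\qquad z_j:=\frac{\pt_j x_j}{\pt_0 x_0}.
\end{equation*}
Raising to the $\nt_i$-th power and multiplying over $i=0,1,\dots,d$, the total power of $\tilde p_0 x_0$ is $|\nt|=N$, so
\begin{equation*}
\frac{\xt^{\nt}}{\thetat^N}=(\pt_0 x_0)^N \prod_{i=0}^{d}\Bigl(1+\sum_{j=1}^{d}u_{i,j}z_j\Bigr)^{\nt_i}.
\end{equation*}
Plugging the generating function definition of $\cP(n',\nt')$ into the right side and collecting $(\pt_0 x_0)^{N-n_1-\cdots-n_d}\prod_{j\geq 1}(\pt_j x_j)^{n_j}=\pt^{n}x^{n}$ gives exactly \eqref{5.1a}.

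For \eqref{5.1b}, I will take the inner product of \eqref{5.1a} with $x^{m}$ and use the orthogonality \eqref{4.1}. All but one term in the resulting sum vanishes, leaving
\begin{equation*}
\langle \xt^{\nt},x^{m}\rangle = N!\,\cP(m',\nt')\,\theta^N\thetat^N.
\end{equation*}
Now expand $x^{m}=\sum_{\tilde k}c_{m,\tilde k}\xt^{\tilde k}$ and compute $\langle x^{m},\xt^{\nt}\rangle$ using \leref{le4.1}; this equals $c_{m,\nt}\,\nt!\,\thetat^N/p^{\nt}$. Since $\langle\,,\,\rangle$ is symmetric, the two expressions for $\langle \xt^{\nt},x^{m}\rangle$ must agree, and solving for $c_{m,\nt}$ yields \eqref{5.1b}.

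I don't anticipate a real obstacle: step one is just recognizing the generating function and doing careful bookkeeping of the scalars $\thetat^N$ and $\pt^{n}$, and step two is a two-line argument once orthogonality of both bases is in hand (which is \eqref{4.1} and \leref{le4.1}). The only point that requires care is verifying that the power of $\pt_0 x_0$ comes out to exactly $n_0=N-n_1-\cdots-n_d$, so that the tilde-$p$ weights assemble into the multi-index $\pt^{n}$ appearing in \eqref{5.1a}. Alternatively, \eqref{5.1b} can be obtained by applying the same direct computation to $R^{-1}=\theta PU$ from \eqref{2.5}, in which role $U$ gets replaced by its transpose; this makes manifest why the same polynomials $\cP(n',\nt')$ appear in both directions (and is the analytic shadow of the bispectral involution $\fb$), but the bilinear form argument is shorter and I would present that one.
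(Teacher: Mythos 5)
Your proof is correct, but it follows a genuinely different route from the paper's. For \eqref{5.1a} the paper does not use the generating function at all: it expands $\xt_k=\xt_0-\thetat\sum_j\pt_j\omega_{k,j}x_j$ from \eqref{3.12} by the multinomial theorem, reorganizes the factorials into Pochhammer symbols, changes summation variables, and matches the result term-by-term with the explicit Gelfand hypergeometric series \eqref{1.3}; in effect it re-derives the Mizukawa--Tanaka formula inside the Lie-theoretic setup and never divides by $x_0$. Your observation that $\xt_i/(\thetat\pt_0x_0)=1+\sum_j u_{i,j}z_j$ with $z_j=\pt_jx_j/(\pt_0x_0)$ (valid for $i=0$ too, since $u_{0,j}=1$) reduces \eqref{5.1a} to the defining generating function of $\cP$, which is shorter and makes the meaning of the change of variables $\xt=xR$ transparent; the small price is that you work momentarily in $\Cset(x)$ and you lean on the generating-function definition rather than on \eqref{1.3} (this is legitimate, since the paper defines $\cP$ by the generating function and quotes \eqref{1.3} from [MT]). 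For \eqref{5.1b} the paper only says ``along the same lines, or use the duality,'' whereas your bilinear-form bootstrap is an explicit, non-circular argument: it uses only \eqref{4.1}, Lemma \ref{le4.1} and the symmetry of $\langle\,,\,\rangle$, all established before Section 5, and it in fact produces the identity $\langle x^n,\xt^{\nt}\rangle=\nu^N N!\,\cP(n',\nt')$ of Corollary \ref{co5.2} along the way. One remark: your computation yields $x^n/\theta^N=N!\sum_{\nt}\cP(n',\nt')\,p^{\nt}\xt^{\nt}/\nt!$, with $\xt^{\nt}$ on the right; this is the intended statement (the $x^{\nt}$ printed in \eqref{5.1b} is evidently a typo, as the transition-matrix interpretation and the proof of Corollary \ref{co5.3} confirm), so no correction to your argument is needed.
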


\begin{proof}
Let us denote $\omega_i=(\omega_{i,1},\omega_{i,2},\dots,\omega_{i,d})$. 
Using equation \eqref{3.12} we find
\begin{align*}
\frac{\xt_{i}^{\nt_{i}}}{\thetat^{\nt_{i}}}&= \sum_{\begin{subarray}{c}c_i\in\Nset_0^{d}\\ |c_i|\leq \nt_{i}\end{subarray}}\frac{\nt_{i}!}{c_i!(\nt_{i}-|c_i|)!}\left(\frac{\xt_0}{\thetat}\right)^{\nt_{i}-|c_i|}\,(-1)^{|c_i|}(\pt')^{c_i}\omega_i^{c_i}(x')^{c_i}\\
&= \sum_{c_i\in\Nset_0^{d}}\frac{(-\nt_{i})_{|c_i|}}{c_i!}\left(\frac{\xt_0}{\thetat}\right)^{\nt_{i}-|c_i|}(\pt')^{c_i}\omega_i^{c_i}(x')^{c_i},
\text{ for }i=1,2,\dots,d.
\end{align*}
If we set $c_i=(a_{i,1},a_{i,2},\dots,a_{i,d})$ and denote by $A$ the matrix with entries $(a_{i,j})$, then multiplying the above equations we obtain
\begin{equation}\label{5.2}
\begin{split}
\frac{\xt^{\nt}}{\thetat^N}=&\sum_{A\in\MdN} \left(\frac{\xt_0}{\thetat}\right)^{N-\sum_{i,j=1}^{d}a_{i,j}}
\prod_{i=1}^{d} (-\nt_{i})_{\sum_{j=1}^{d}a_{i,j}}  \prod_{j=1}^{d}(\pt_{j}x_{j})^{\sum_{i=1}^{d}a_{i,j}}\\
&\quad \times \prod_{i,j=1}^{d}\frac{\omega_{i,j}^{a_{i,j}}}{a_{i,j}!}.
\end{split}
\end{equation}
Using now \eqref{3.10} we get
\begin{equation}\label{5.3}
\left(\frac{\xt_0}{\thetat}\right)^{N-\sum_{i,j=1}^{d}a_{i,j}}=\sum_{\begin{subarray}{c}\gamma\in\Nset_0^{d+1}\\ |\gamma|= N-\sum_{i,j=1}^{d}a_{i,j}\end{subarray}}\frac{(N-\sum_{i,j=1}^{d}a_{i,j})!}{\gamma!}\prod_{k=0}^{d}(\pt_kx_k)^{\gamma_k}.
\end{equation}
Note that 
$$\frac{(N-\sum_{i,j=1}^{d}a_{i,j})!}{\gamma!}=\frac{N!}{\gamma_0! \prod_{j=1}^{d}(\gamma_{j}+\sum_{i=1}^{d}a_{i,j})!}\frac{\prod_{j=1}^{d}(-\gamma_{j}-\sum_{i=1}^{d}a_{i,j})_{\sum_{i=1}^{d}a_{i,j}}}{(-N)_{\sum_{i,j=1}^{d}a_{i,j}}}.$$
If we change the variables as follows $n_0=\gamma_0$ and $n_{j}=\gamma_{j}+\sum_{i=1}^{d}a_{i,j}$ for $j=1,2,\dots,d$ and we substitute the last formula in \eqref{5.3} we obtain
\begin{equation}\label{5.4}
\begin{split}
\left(\frac{\xt_0}{\thetat}\right)^{N-\sum_{i,j=1}^{d}a_{i,j}}=&\frac{1}{(-N)_{\sum_{i,j=1}^{d}a_{i,j}}}\sum_{n\in\Iset}\frac{N!}{n!}\prod_{j=1}^{d}(-n_j)_{\sum_{i=1}^{d}a_{i,j}}\\
&\quad\times(\pt_0x_0)^{n_0}\prod_{j=1}^{d}(\pt_{j}x_{j})^{n_{j}-\sum_{i=1}^{d}a_{i,j}}.
\end{split}
\end{equation}
The proof of \eqref{5.1a} now follows immediately by plugging \eqref{5.4} in \eqref{5.2}. The proof of 
\eqref{5.1b} can be obtained along the same lines, or one can use the duality.
\end{proof}
\begin{Corollary}\label{co5.2}
For $n,\nt \in \Iset$ we have
\begin{equation}\label{5.5}
\cP(n',\nt')=\frac{1}{\nu^{N}N!}\langle x^n,\xt^{\nt}\rangle.
\end{equation}
\end{Corollary}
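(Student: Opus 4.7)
The plan is to obtain this corollary as a direct computation from Theorem 5.1 together with the orthogonality relation \eqref{4.1}. All the heavy lifting has been done; the corollary just translates the change-of-basis formula into an inner-product statement.

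First I would take the expansion \eqref{5.1a} for $\xt^{\nt}/\thetat^N$ as a linear combination of the monomials $x^m$ and pair it with $x^n$ using the bilinear form. Since \eqref{4.1} gives $\langle x^n,x^m\rangle=\delta_{n,m}\,(n!/\pt^n)\,\theta^N$, only the term $m=n$ survives the sum, and the factors $\pt^n/n!$ and $n!/\pt^n$ cancel. This produces
\begin{equation*}
\langle x^n,\xt^{\nt}\rangle = \thetat^N\cdot N!\cdot \cP(n',\nt')\cdot\theta^N.
\end{equation*}

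Next I would use \eqref{2.5}, namely $\theta=\nu/\thetat$, to collapse $(\thetat\theta)^N$ to $\nu^N$, giving $\langle x^n,\xt^{\nt}\rangle = \nu^N\,N!\,\cP(n',\nt')$, which rearranges to \eqref{5.5}. There is no real obstacle here: the only thing to be careful about is correctly bookkeeping the scalar factors $\theta^N$, $\thetat^N$, and $\nu^N$, and remembering that the symmetry of $\langle\,,\,\rangle$ allows pairing in either order so one does not need \eqref{5.1b} separately. As a sanity check, one can instead start from \eqref{5.1b}, pair with $\xt^{\nt}$, invoke \leref{le4.1} to get $\langle \xt^n,\xt^{\nt}\rangle=\delta_{n,\nt}(n!/p^n)\thetat^N$, and arrive at the same formula, confirming the symmetry of $\cP(n',\nt')$ in $n$ and $\nt$ that is already visible in \eqref{1.3}.
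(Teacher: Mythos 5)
Your proposal is correct and is exactly the argument the paper intends: the corollary is stated as an immediate consequence of Theorem \ref{th5.1}, obtained by pairing \eqref{5.1a} with $x^n$, using the orthogonality \eqref{4.1}, and collapsing $(\theta\thetat)^N=\nu^N$ via \eqref{2.5}. Your scalar bookkeeping is right, so nothing is missing.
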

As an immediate consequence of \thref{th5.1} and \coref{co5.2}, we obtain a new proof of the orthogonality relations established in \cite{G,MT}.
\begin{Corollary}\label{co5.3}
For $\nt,\kt\in \Iset$ we have
\begin{subequations}
\begin{equation}\label{5.6a}
N! \sum_{n\in\Iset}\cP(n',\nt')\cP(n',\kt')\frac{\pt^{n}}{n!}=\delta_{\nt,\kt}\frac{\nt!}{N!\,\nu^{N}\,p^{\nt}},
\end{equation}
and similarly for $n,k\in\Iset$ we have
\begin{equation}\label{5.6b}
N! \sum_{\nt\in\Iset}\cP(n',\nt')\cP(k',\nt')\frac{p^{\nt}}{\nt!}=\delta_{n,k}\frac{n!}{N!\,\nu^{N}\,\pt^{n}}.
\end{equation}
\end{subequations}
\end{Corollary}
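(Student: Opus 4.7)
The plan is to compute $\langle \xt^{\nt},\xt^{\kt}\rangle$ in two different ways and compare. I will prove \eqref{5.6a} this way; the identity \eqref{5.6b} is entirely parallel and can also be deduced from \eqref{5.6a} via the bispectral involution $\fb$, which exchanges $P\leftrightarrow\Pt$ and the roles of $n$ and $\nt$ while preserving $\cP$ up to the swap of its arguments.

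On one hand, \leref{le4.1} yields directly
\begin{equation*}
\langle \xt^{\nt},\xt^{\kt}\rangle=\delta_{\nt,\kt}\,\frac{\nt!}{p^{\nt}}\,\thetat^{N}.
\end{equation*}
On the other hand, substitute the expansion \eqref{5.1a} into the first slot and then use bilinearity of $\langle\,,\,\rangle$ to get
\begin{equation*}
\langle \xt^{\nt},\xt^{\kt}\rangle=\thetat^{N}N!\sum_{n\in\Iset}\cP(n',\nt')\,\frac{\pt^{n}}{n!}\,\langle x^{n},\xt^{\kt}\rangle.
\end{equation*}
By \coref{co5.2}, each factor $\langle x^{n},\xt^{\kt}\rangle$ equals $\nu^{N}N!\,\cP(n',\kt')$, so
\begin{equation*}
\langle \xt^{\nt},\xt^{\kt}\rangle=\thetat^{N}\nu^{N}(N!)^{2}\sum_{n\in\Iset}\cP(n',\nt')\cP(n',\kt')\,\frac{\pt^{n}}{n!}.
\end{equation*}
Equating the two evaluations and cancelling the common factor $\thetat^{N}\nu^{N}N!$ produces precisely \eqref{5.6a}.

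For \eqref{5.6b} I would mirror the argument: compute $\langle x^{n},x^{k}\rangle$ in two ways, directly from the definition \eqref{4.1} and via \eqref{5.1b} combined once more with \coref{co5.2}. The constants $p,\pt$ and the indices $n,\nt$ swap roles but the bookkeeping is identical.

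There is no real obstacle: once \thref{th5.1}, \coref{co5.2} and \leref{le4.1} are available, the derivation is a formal manipulation of generating-style coefficients. The only thing to watch is the normalization — keeping track of the factors $\theta^{N}$, $\thetat^{N}$ and $\nu^{N}$ — but my approach sidesteps any use of the relation $\theta\thetat=\nu$, because one of the two $\xt$'s is never expanded out: it is absorbed into an inner product that \coref{co5.2} evaluates directly.
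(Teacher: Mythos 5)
Your proposal is correct and is essentially the paper's own argument: pairing both sides of \eqref{5.1a} with $\xt^{\kt}$ and invoking \eqref{4.3} and \eqref{5.5} is exactly what the paper does (and likewise for \eqref{5.6b}). The bookkeeping of $\thetat^{N}$ and $\nu^{N}$ in your display is right, so nothing further is needed.
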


\begin{proof}
Equation \eqref{5.6a} follows if we consider the inner product of each side of equation \eqref{5.1a} with 
$\xt^{\kt}$ and then we use \eqref{4.3} and \eqref{5.5}. The proof of \eqref{5.6b} is similar.
\end{proof}

%%%%%%%%%%%%%%%%%%%%%%%%%%%%%%%%%%%%%%%%%% Section 6
\section{Bispectral difference equations}\label{se6}

In this section we show that the Cartan subalgebras $H$ and $\Ht$ lead naturally to recurrence relations for the multivariate Krawtchouk polynomials. We work below with $\Cset^d$ and we denote by 
$v_1,v_2,\dots,v_{d}$ its standard basis.

\begin{Theorem}\label{th6.1}
Fix $m=(m_1,m_2,\dots,m_d)\in\Nset_0^{d}$  and $\mt=(\mt_1,\mt_2,\dots,\mt_d)\in\Nset_0^{d}$ such that $|m|\leq N$ and $|\mt|\leq N$.
Then for every $i\in\{1,2,\dots,d\}$ we have
\begin{subequations}\label{6.1}
\begin{align}
&\left(m_{i}-\frac{N}{d+1}\right)\cP(m,\mt)=\sum_{l=1}^{d}\pt_{i}u_{l,i}\mt_{l}\cP(m,\mt-v_l)\nonumber\\
&\qquad
+\nu\sum_{k=1}^{d}\pt_{i}p_{k}u_{k,i}(N-|\mt|)\cP(m,\mt+v_k)\nonumber\\
&\qquad
+\sum_{j=1}^{d}\pt_{i}(\nu p_{j}u_{j,i}^2-1)\left(\mt_{j}-\frac{N}{d+1} \right)\cP(m,\mt)\nonumber\\
&\qquad
+\nu\sum_{1\leq k\neq l\leq d}\pt_{i}p_{k}u_{k,i}u_{l,i}\mt_{l}\cP(m,\mt+v_k-v_l),\label{6.1a}
\end{align}
and 
\begin{align}
&\left(\mt_{i}-\frac{N}{d+1}\right)\cP(m,\mt)=\sum_{l=1}^{d}p_{i}u_{i,l}m_{l}\cP(m-v_l,\mt)\nonumber\\
&\qquad
+\nu\sum_{k=1}^{d}p_{i}\pt_{k}u_{i,k}(N-|m|)\cP(m+v_k,\mt)\nonumber\\
&\qquad
+\sum_{j=1}^{d}p_{i}(\nu\pt_{j}u_{i,j}^2-1)\left(m_{j}-\frac{N}{d+1} \right)\cP(m,\mt)\nonumber\\
&\qquad
+\nu\sum_{1\leq k\neq l\leq d}p_{i}\pt_{k}u_{i,k}u_{i,l}m_{l}\cP(m+v_k-v_l,\mt).\label{6.1b}
\end{align}
\end{subequations}
\end{Theorem}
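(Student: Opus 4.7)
The plan is to derive both relations from Corollary~\ref{co5.2}, which identifies $\nu^{N}N!\,\cP(m,\mt)$ with $\langle x^{n},\xt^{\nt}\rangle$ (where $n=(N-|m|,m)$ and $\nt=(N-|\mt|,\mt)$), by computing $\langle \phi_{i}.x^{n},\xt^{\nt}\rangle$ and $\langle x^{n},\phit_{i}.\xt^{\nt}\rangle$ in two different ways.

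For \eqref{6.1a}, I would start from \eqref{3.3}, which gives $\phi_{i}.x^{n}=(m_{i}-N/(d+1))x^{n}$, so that
\[
\bigl(m_{i}-\tfrac{N}{d+1}\bigr)\langle x^{n},\xt^{\nt}\rangle
=\langle \phi_{i}.x^{n},\xt^{\nt}\rangle
=\langle x^{n},\fa(\phi_{i}).\xt^{\nt}\rangle
=\langle x^{n},\phi_{i}.\xt^{\nt}\rangle,
\]
using the adjoint identity \eqref{4.2} and $\fa(\phi_{i})=\phi_{i}$ from \leref{le2.1}. To evaluate the right-hand side I would invoke the expansion \eqref{2.7} of $\phi_{i}$ in the dual basis and the explicit actions \eqref{3.7}, \eqref{3.8} of $\phit_{j}$ and $\et_{k,l}$ on the $\xt^{\nt}$. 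Dividing the resulting identity by $\nu^{N}N!$ converts each term into the Krawtchouk polynomial indexed by a shift of $\nt$.

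The remaining task is the bookkeeping of translating shifts of $\nt\in\Iset$ into shifts of $\mt=\nt'$. I would split the double sum over $0\leq k\neq l\leq d$ coming from the $\et_{k,l}$ piece of \eqref{2.7} into three ranges: (i) $k=0$, $l\geq 1$, where $u_{0,i}=1$ and $p_{0}=1/\nu$ collapse the coefficient and $(\nt+v_{0}-v_{l})'=\mt-v_{l}$ with weight $\nt_{l}=\mt_{l}$, yielding the first sum on the right of \eqref{6.1a}; (ii) $l=0$, $k\geq 1$, where $u_{0,i}=1$ and $\nt_{0}=N-|\mt|$ give the second sum; and (iii) $k,l\geq 1$ with $k\neq l$, which contributes the final off-diagonal term. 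The $\phit_{j}$ part of \eqref{2.7} contributes the diagonal term proportional to $(\mt_{j}-N/(d+1))\cP(m,\mt)$. The combinatorial matching of the three regimes with the four summands of \eqref{6.1a} is the only nontrivial point; everything else is a direct substitution.

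The proof of \eqref{6.1b} proceeds symmetrically: start from $\phit_{i}.\xt^{\nt}=(\mt_{i}-N/(d+1))\xt^{\nt}$ by \eqref{3.7}, move $\phit_{i}$ across $\langle\,,\,\rangle$ using $\fa(\phit_{i})=\phit_{i}$, and expand $\phit_{i}.x^{n}$ with the formula \eqref{2.6} together with \eqref{3.3}, \eqref{3.4}. The same three-way decomposition of the sum $\sum_{0\leq k\neq l\leq d}$ now converts shifts in $n$ into shifts in $m$ and produces the four terms on the right of \eqref{6.1b}. Thus both recurrences follow from a single mechanism: eigenvector equations on one basis become, via the antiautomorphism $\fa$, recurrence relations expressed in the dual basis.
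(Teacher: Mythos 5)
Your proposal is correct and follows essentially the same route as the paper's proof: apply the adjoint identity \eqref{4.2} with $\fa(\phi_i)=\phi_i$ (respectively $\fa(\phit_i)=\phit_i$) to $\langle x^{n},\xt^{\nt}\rangle$, expand via \eqref{2.7} (respectively \eqref{2.6}) using the actions \eqref{3.3}--\eqref{3.4} and \eqref{3.7}--\eqref{3.8}, and divide by $\nu^{N}N!$ via Corollary~\ref{co5.2}. Your explicit three-way split of the sum over $0\leq k\neq l\leq d$ (with $u_{0,i}=1$, $\nu p_{0}=1$, $\nt_{0}=N-|\mt|$) is exactly the bookkeeping the paper leaves implicit, and it is carried out correctly.
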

\begin{proof}
For $m,\mt\in\Nset_0^{d}$ such that $|m|\leq N$ and $|\mt|\leq N$, we consider 
$n=(N-|m|,m_1,m_2,\dots,m_d)\in\Iset$ and $\nt=(N-|\mt|,\mt_1,\mt_2,\dots,\mt_d)\in\Iset$. 
From \leref{le2.1} and \eqref{4.2} it follows that
\begin{equation}\label{6.2}
\langle  \phi_i. x^{n},\xt^{\nt} \rangle =\langle  x^{n},\phi_i. \xt^{\nt} \rangle.
\end{equation}
From \eqref{3.3} and \coref{co5.2} we see that the left-hand side of \eqref{6.2} is equal to 
$\nu^N N! $ times the left-hand side of \eqref{6.1a}. Now we use \eqref{2.7} and \leref{le2.1} to evaluate 
the right-hand side of equation \eqref{6.2}, which gives $\nu^N N! $ times the right-hand side of \eqref{6.1a}. The proof of 
\eqref{6.1b} is similar.
\end{proof}

\begin{Remark}\label{re6.2}
\thref{th6.1} establishes the bispectrality of the polynomials $\cP(m,\mt)$. Indeed, equations \eqref{6.1a} show that the polynomials $\cP(m,\mt)$ are common eigenfunctions of $d$ difference operators acting on the variables $\mt_1,\mt_2,\dots,\mt_d$, with coefficients independent of $m_1,m_2,\dots,m_d$, while equations \eqref{6.1b} indicate that the same polynomials are common  eigenfunctions of $d$ difference operators acting on the variables $m_1,m_2,\dots,m_d$, with coefficients independent of $\mt_1,\mt_2,\dots,\mt_d$. 

Formulas \eqref{6.1} show that the polynomials $\cP(m,\mt)$ form a remarkable basis. A random basis of polynomials orthogonal with respect to the the multinomial distribution will not satisfy such simple recurrence relations, see \cite{DX} for the general theory. If we add equations \eqref{6.1a} for $i=1,2,\dots,d$ and use the matrix equation \eqref{1.2} we obtain
\begin{equation}\label{6.3}
\begin{split}
-|m|\cP(m,\mt)=&p_0\sum_{l=1}^{d}\mt_l\cP(m,\mt-v_l)+\sum_{l=1}^{d}p_l(N-|\mt|)\cP(m,\mt+v_l)\\
&+\sum_{1\leq k\neq l\leq d}p_k\mt_l \cP(m,\mt+v_k-v_l)\\
&+\left(\sum_{j=1}^{d}\mt_jp_j+p_0(N-|\mt|)-N\right)\cP(m,\mt).
\end{split}
\end{equation}
Note that the coefficients on the right-hand side in the above partial difference equation depend only on $p$, $N$ and $\mt$, while the eigenvalue on the left-hand side depends only on the total degree $|m|$ (if we think of $\cP(m,\mt)$ as a polynomial of $\mt$). This equation is {\em universal\/} since every basis of polynomials orthogonal with respect to the multinomial distribution 
$$\frac{p_{0}^{N-|\mt|}p_1^{\mt_1}p_2^{\mt_2}\cdots p_{d}^{\mt_{d}}}{(N-|\mt|)!\mt_1!\mt_2!\cdots \mt_d!}$$ will satisfy \eqref{6.3}, see \cite{IX}.
\end{Remark}

%%%%%%%%%%%%%%%%%%%%%%%%%%%%%%%%%%%%%%%%%% Section 7
\section{Explicit examples}

\subsection{}\label{ss7.1}
Let $d=2$ and define
\begin{align*}
&    u_{1,1}=1-\frac{(\fp_1+\fp_2)(\fp_1 +\fp_3)}{\fp_1(\fp_1+\fp_2+\fp_3+\fp_4)}, 
&& u_{1,2}=1-\frac{(\fp_1+\fp_2)(\fp_2 +\fp_4)}{\fp_2(\fp_1+\fp_2+\fp_3+\fp_4)},\\
&  u_{2,1}=1-\frac{(\fp_1+\fp_3)(\fp_3 +\fp_4)}{\fp_3(\fp_1+\fp_2+\fp_3+\fp_4)},
&& u_{2,2}=1-\frac{(\fp_2+\fp_4)(\fp_3 +\fp_4)}{\fp_4(\fp_1+\fp_2+\fp_3+\fp_4)},
\end{align*}

\begin{align*}
&    p_1=\frac{\fp_1 \fp_2 (\fp_1+\fp_2+\fp_3+\fp_4)}{(\fp_1+\fp_2)(\fp_1+\fp_3)(\fp_2+\fp_4)}, 
&& p_2= \frac{\fp_3 \fp_4 (\fp_1+\fp_2+\fp_3+\fp_4)}{(\fp_1+\fp_3)(\fp_2+\fp_4)(\fp_3+\fp_4)},\\
&\pt_1=\frac{\fp_1 \fp_3 (\fp_1+\fp_2+\fp_3+\fp_4)}{(\fp_1+\fp_2)(\fp_1+\fp_3)(\fp_3+\fp_4)},
&&\pt_2= \frac{\fp_2 \fp_4 (\fp_1+\fp_2+\fp_3+\fp_4)}{(\fp_1+\fp_2)(\fp_2+\fp_4)(\fp_3+\fp_4)},
\end{align*}
and 
$$p_0=\pt_0=1-p_1-p_2,$$
where the numbers $\{\fp_j\}_{1\leq j\leq 4}$ are essentially arbitrary, although certain combinations are forbidden in order to avoid dividing by $0$. A straightforward computation shows that \eqref{1.2} holds and we obtain the polynomials defined by Hoare and Rahman \cite{HR}. The constructions in the present paper correspond to the ones in  \cite{IT}. The bispectral involution $\fb$ in this case amounts to exchanging the parameters $\fp_2$ and $\fp_3$. We note also that independently of  \cite{IT}, Gr\"unbaum and Rahman \cite{GR} used different methods to derive an interesting recurrence relation for these polynomials which was suggested in \cite{Gr}.

\subsection{}\label{ss7.2}
Within the context of orthogonal polynomials one is often interested in explicit formulas for the polynomials 
in terms of the parameters which appear in the measure. We construct one such example which gives the multivariate Krawtchouk polynomials discovered by Milch~\cite{M}.

Let us fix  nonzero complex numbers $p_0,p_1,\dots,p_{d}$ such that $p_0+p_1+\cdots+p_{d}=1$. We define $\pt_0,\pt_1,\dots,\pt_{d}$ in terms of  $p_0,p_1,\dots,p_{d}$  as follows:
\begin{subequations}\label{7.1}
\begin{align}
&\pt_0=p_{0}\label{7.1a}\\
&\pt_{k}=\frac{p_kp_0}{(1-\sum_{j=1}^{k}p_j)(1-\sum_{j=1}^{k-1}p_j)}\text{ for }k=1,2,\dots,d.\label{7.1b}
\end{align}
\end{subequations}
Next we define a $(d+1)\times(d+1)$ matrix $U=(u_{i,j})$ with entries
\begin{subequations}\label{7.2}
\begin{align}
&u_{i,j}=\delta_{0,i},  &\text{ when }i<j,\label{7.2a}\\
&u_{i,j}=1,                 &\text{ when }i>j,\label{7.2b}\\
&u_{0,0}=1,\label{7.2c}\\
&u_{i,i}=-\frac{1-\sum_{k=1}^{i}p_k}{p_{i}}, &\text{ for }i=1,2,\dots,d.\label{7.2d}
\end{align}
\end{subequations}
Thus, $U$ is a matrix of the form \eqref{1.1} where the remaining entries are 0's and 1's above and below the diagonal respectively, and the diagonal entries are given in \eqref{7.2d}. One can check that with the above notations equation \eqref{1.2} holds and therefore we obtain explicit formulas for all constructions in the paper. Up to a permutation of the variables and the parameters, this choice leads to the multivariate Krawtchouk polynomials considered in \cite{M} and the corresponding bispectral difference operators constructed in \cite[Section~5.4]{GI}.

\subsection{}\label{ss7.3}
Fix now $q\notin\{0, 1\}$ and define $p_0, p_1,\dots,p_{d}$ as follows:
\begin{subequations}\label{7.3}
\begin{align}
&p_0=q^{-d}\label{7.3a}\\
&p_{k}=q^{-d+k-1}(q-1)\text{ for }k=1,2,\dots,d.\label{7.3b}
\end{align}
\end{subequations}
We denote
\begin{equation}\label{7.4}
P=\Pt=\diag(p_0,p_1,\dots,p_{d}),
\end{equation}
and we consider also a $(d+1)\times(d+1)$ matrix $U=(u_{i,j})$ with entries
\begin{subequations}\label{7.5}
\begin{align}
&u_{i,j}=1,  &\text{ when }i+j\leq d,\label{7.5a}\\
&u_{i,j}=\frac{1}{1-q},                 &\text{ when }i+j=d+1,\label{7.5b}\\
&u_{i,j}=0,&\text{ when }i+j>d+1.\label{7.5c}
\end{align}
\end{subequations}
With the above notations, we see that \eqref{1.2} holds leading to the family of multivariate Krawtchouk polynomials introduced in \cite{DS}. 

\section*{Acknowledgments}
I would like to thank Paul Terwilliger for suggestions to improve an earlier version of this paper.

\end{document}